\newtheorem{theorem}{Theorem}[section]
\newtheorem{lemma}[theorem]{Lemma}
\newtheorem{coro}[theorem]{Corrolary}
\newtheorem{defi}[theorem]{Definition}
\newtheorem{prob}[theorem]{Problem}
\newtheorem{ass}[theorem]{Assumption}
\newcommand{\Id}{\mathbb{I}}
\newcommand{\mE}{\mathcal{E}}
\newcommand{\rN}{\mathbb{R}}
\newcommand{\mS}{{\mathcal S}}
\newcommand{\vv}{\mbox{\bf v}}
\newcommand{\ww}{\mbox{\bf w}}
\newcommand{\eg}{\varepsilon}
\newcommand{\eps}{\epsilon}
\newcommand{\llg}{\lambda}
\newcommand{\Og}{\Omega}
\newcommand{\pdh}{\partial}
\newcommand{\supp}{\mbox{supp}}
\newcommand{\diam}{\mbox{diam}}
\newcommand{\dist}{\mbox{dist}}
\newcommand{\R}{\mathbb R}
\newcommand{\edot}{\,\cdot\,}
\newcommand{\x}{\mathrm{x}}
\newcommand{\f}{{\bf f}}
\newcommand{\g}{{\bf g}}
\newcommand{\uu}{{\bf u}}
\newcommand{\U}{{\bf U}}
\newcommand{\h}{{\bf h}}
\def \p { \partial}
\title{Time reversal method for thermoacoustic acoustic tomography in elastic media: convergence with sharp observation time}
\author{Vitaly Katsnelson\thanks{Department of Computational and Applied Mathematics, Rice University, Houston TX, USA. \texttt{vitaly.katsnelson@rice.edu}}\and
Linh V. Nguyen \thanks{Department of Mathematics, University of Idaho, Moscow, ID 83844, USA. \texttt{lnguyen@uidaho.edu}}}
\date{}
\begin{document}
\maketitle
\begin{abstract}
In this article, we consider the inverse source problem arising in photoacoustic tomography in elastic media. We show that the time reversal method, proposed by Tittelfitz [Inverse Problems 28.5 (2012): 055004], converges with the sharp observation time without any constraint on the speeds of the longitudinal and shear waves.
\end{abstract}

\section{Introduction} Let us consider the isotropic elastic wave propagation in the free space
\begin{eqnarray} \label{E:TAT} \left \{\begin{array}{l} \uu_{tt}(t,\x) - \Delta^* \uu(t,\x) =0,~ \x \in \R^3,~ t \geq 0,\\[4 pt] \uu(0,\x) = \f(\x),~ \uu_t(0,\x) = 0,~\x \in \R^3, \end{array} \right.\end{eqnarray}
where $\uu=(u_1,u_2,u_3)$ is the displacement vector. Here,
$$\Delta^* \uu  = \nabla \Big[\mu(\x) \big( \nabla \uu + (\nabla \uu)^T \big) \Big] + \nabla ( \llg(\x) \, \nabla \cdot \uu),$$ where $\lambda,\mu$ are Lam\'e parameters, and
$$(\nabla \uu)_{i,j} = \frac{\partial \uu_i}{\partial \x_j}$$ is the Jacobian of $\uu$ and $(\nabla \uu)^T$ is its transpose. We assume that $\lambda=\lambda(\x)$ and $\mu=\mu(\x)$ are positive and bounded. Moreover, $\mu$ is from below by a positive constant.

Let $\Og$ be a bounded domain in $\R^3$ with the boundary $\mS= \partial \Og$. In this article, assuming that $\supp(\f) \subset \Og_0 \Subset \Og$, we are interested in the inverse source problem.
\begin{prob} \label{P:Main}
Find the initial displacement $\f$ given the data $\g = \uu|_{[0,T] \times \mS}$ for some (observation time) $T>0$.
\end{prob}
This problem arises in thermo/photo-acoustic tomography in elastic media. The same problem in the acoustic setting is very well-studied (see, e.g., \cite{FPR,kuchment2008mathematics,HKN,US,stefanov2011thermo,qian2011new,kuchment2014radon}). Problem~\ref{P:Main} was first studied in \cite{tittelfitz12}. In that article, following the work of Stefanov and Uhlmann \cite{US}, the author proposed a time reversal method to solve Problem~\ref{P:Main}. However, in order to prove the convergence of the method, the author assumed that the supremum of  the P-wave speed is less than three times the infimum of the S-wave speed. Moreover, the required measurement time $T$ has to be sufficiently large. In this article, we show that the same algorithm works without the restriction on the wave speeds. The needed observation time $T$ is the sharp observation time which comes from the visibility condition (see Assumption~\ref{A:sharp}).

\section{Notation and statement of the main result}
Let us first introduction some notations.


\begin{defi} Let $U \subset \R^3$ be an open set and $\f,\g:U \to \R^3$. We define the following symmetric bilinear form
$$(\f,\g)_{H(U)} = \int_{U} \lambda(\x) \, (\nabla \cdot \f) (\nabla \cdot \g)  + \frac{\mu(\x)}{2} \, \big[\nabla \f + (\nabla \f)^T \big] \cdot \big[\nabla \g + (\nabla \g)^T \big] \ d \x$$ and the semi-norm
$$\|\f\|_{H(U)} =(\f,\f)_{H(U)} ^{1/2}. $$
\end{defi}

Consider the case $U=\Og_0$. For all $\f \in C_0^\infty(\Og_0)$
$$(\f,\f)_{H(\Og_0)} \geq  \inf_{\x} \mu(\x) \,  \int_{\Og_0} \|\nabla \f + (\nabla \f)^T\|^2 \ d \x \geq C \, \|\f\|_{H^1(\Og_0)}.$$
Here, the second inequality comes from Korn's inequality (see \cite[page 322]{marsden1994mathematical}). Therefore, $\|\edot\|_{H(\Og_0)}$ is a norm in $C_0^\infty(\Og_0)$. Let us denote by  $H_0(\Og_0)$ the completion of $C_0^\infty(\Og_0)$ under that norm. Then, $H_{0}(\Og_0) \cong H^1_0(\Og_0)$.


The following result describes the connection between $(\edot, \edot)_{H(U)}$ and the elastic operator $\Delta^*$ arising in problem (\ref{E:TAT}).
\begin{lemma}\label{L:Int}
For all $\f \in H^1(U)$ and $\g \in H^1_0(U)$, we have
$$(-\Delta^* \f, \g)_{L^2(U)} = (\f,\g)_{H(U)}.$$
\end{lemma}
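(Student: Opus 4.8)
The plan is to prove the identity as the Green (integration by parts) formula for the linearized elasticity operator, combined with a symmetrization trick. First I would fix the reading of the left-hand side: for $\f\in H^1(U)$ the matrix field $\mu(\x)\bigl(\nabla\f+(\nabla\f)^T\bigr)+\llg(\x)(\nabla\cdot\f)\,\Id$ lies in $L^2(U)$, so its divergence $\Delta^*\f$ is a priori only an element of $H^{-1}(U)$ and $(-\Delta^*\f,\g)_{L^2(U)}$ should be understood as the $H^{-1}(U)$--$H^1_0(U)$ duality pairing (it coincides with the genuine $L^2$ inner product whenever $\Delta^*\f\in L^2(U)$). Since $C_0^\infty(U)$ is dense in $H^1_0(U)$ and both sides depend continuously on $\g$ in the $H^1(U)$-norm — the right-hand side by Cauchy--Schwarz and the boundedness of $\llg,\mu$, the left-hand side by definition of the pairing — it suffices to establish the identity for $\g\in C_0^\infty(U)$.

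For such a test field $\g$, the definition of the distributional divergence gives, with no boundary term since $\g$ has compact support in $U$,
$$\langle -\Delta^*\f,\g\rangle=\int_U \mu\,\bigl[\nabla\f+(\nabla\f)^T\bigr]:\nabla\g\;+\;\llg\,(\nabla\cdot\f)(\nabla\cdot\g)\ d\x .$$
The only index bookkeeping to check here is that the outer $\nabla$ in the definition of $\Delta^*$ is the row-wise divergence of a matrix field, so that pairing $\partial_j\bigl[\mu(\partial_jf_i+\partial_if_j)\bigr]$ against $g_i$ produces $\mu(\partial_jf_i+\partial_if_j)\,\partial_jg_i$, and that $\nabla(\llg\,\nabla\cdot\f)$ paired with $\g$ produces $\llg\,(\nabla\cdot\f)(\nabla\cdot\g)$.

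Finally I would symmetrize: because $\nabla\f+(\nabla\f)^T$ is a symmetric matrix, its Frobenius contraction with any matrix $M$ equals its contraction with the symmetric part $\tfrac12(M+M^T)$. Applying this with $M=\nabla\g$ rewrites the $\mu$-integrand as $\tfrac{\mu}{2}\bigl[\nabla\f+(\nabla\f)^T\bigr]\cdot\bigl[\nabla\g+(\nabla\g)^T\bigr]$, and together with the unchanged $\llg$-term this is exactly the integrand of $(\f,\g)_{H(U)}$. The one genuine subtlety — the step I would be most careful about — is justifying the integration by parts for merely $H^1$ data; this is handled either by taking the duality pairing as the definition of the left-hand side, or, if a pointwise $L^2$ statement is wanted, by first proving the identity for $\f\in C^\infty(\overline U)$ via the classical divergence theorem (the boundary term $\int_{\partial U}(\sigma(\f)\nu)\cdot\g\,dS$ vanishes precisely because $\g\in H^1_0(U)$) and then passing to the limit. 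Everything else is routine.
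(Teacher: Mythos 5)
Your proposal is correct and is precisely the argument the paper has in mind: the paper omits the proof entirely, stating only that it ``follows from a simple integration by parts argument,'' and your write-up supplies exactly that, with the factor of $\tfrac{1}{2}$ correctly accounted for by the symmetrization step and with appropriate care about the $H^{-1}$--$H^1_0$ duality reading of the left-hand side.
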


The proof of this lemma follows from a simple integration by parts argument. We skip it for the sake of brevity.

Let us recall that for each $\f \in H_0(\Og_0) \cong H_0^1(\Og_0)$, problem (\ref{E:TAT}) has a unique solution (see, e.g., \cite{marsden1994mathematical})
$$\uu \in C([0,T] \times H(\rN^3)) \cap C^1([0,T] \times L^2 (\rN^3)).$$
For the notational convenience we will denote $\U=(\uu,\uu_t)$. Then $\U(t) \in H(\R^3) \times L^2(\R^3)$ for all $t \geq 0$.

To motivate the main assumption of this article, let us describe the propagation of singularities of the solution $\uu$ to equation (\ref{E:TAT}). It is well-known that $\uu$ can be decomposed microlocally into two modes: the P-wave  and S-wave (see, e.g., \cite{dencker1982propagation,taylor1978propagation,brytik2011decoupling}). The P-wave propagates with speed $c_p(\x) := \sqrt{\lambda(\x) + 2 \mu(\x)}$ while the S-wave with speed $c_s(\x) := \sqrt{\mu(\x)}$. Let $(\x,\xi) \in S^* \Og$ \footnote{$S^* \Og$ denotes the unit cotangent bundle of $\Og$.} be a singularity of $\f$. It induces either a pair of P-wave singularities propagating on the bicharacteristics rays $\{(t,\tau=  \pm |\xi|_{p},\gamma^p_{\x,\xi}(\pm t),\dot \gamma^p_{\x,\xi}(\pm t)): t \geq 0\}$ or a pair of S-wave singularities propagating on the bicharacteristics rays $\{(t,\tau= \pm |\xi|_{s},\gamma^s_{\x,\xi}( \pm t),\dot \gamma^s_{\x,\xi}(\pm t)): t\geq 0\}$ (or possibly both of them). Here, $\gamma^{p/s}_{\x,\xi}$ is the the unit speed geodesics originating from $\x$ along direction $\xi$ under the metrics
  $d_{p/s}= c^{-2}_{p/s} \, d\x^2$. Let us denote by $\tau^{p/s}_{+} (\x,\xi)>0$ and  $\tau^{p/s}_{-} (\x,\xi)<0$ the times the geodesics $\gamma^{p/s}_{\x,\xi}$ hits the surface $\mS$ (i.e., the time $t = \pm \tau^{p/s}_\pm(\x,\xi)>0$ is the moment the corresponding propagating singularity is observed on the surface $\mS$). We assume that for $t>\tau_+^{p/s}(\x,\xi)$ and $t<\tau^{p/s}_{-}(\x,\xi)$, the geodesics $\gamma^{p/s}_{\x, \xi}(t)$ leaves and does not return to the domain $\Og$. It is understood that (see \cite{tittelfitz2015inverse}) in order for Problem~\ref{P:Main} to be stable, at least one of the aforementioned propagating singularities has to be observed on $\mS$. Therefore, we make the following assumption:

\begin{ass} \label{A:sharp}
For all  $(\x,\xi) \in S^* \Og$, $\min \{\tau_+^{p/s} (\x,  \xi), - \tau_-^{p/s} (\x, \xi) \} < T$.
\end{ass}
In this article, we will prove that the Neumann series method proposed in \cite{tittelfitz12} converges under this sharp condition.  For our convenience, we denote
$$c_+=\sup_\x c_p(\x), \quad c_-=  \inf_\x c_s(\x),$$
and $\ell_{p/s}(\Og)$ the length of the longest geodesics segment inside $\overline \Og$ with respect to the corresponding metrics. We note that the above assumption means $T> \ell_s(\Og)/2$.

Let $U$ be an open subset of $\R^3$ (for latter purposes, $U$ is either $\Og$ or $\Og_0$). We define the elastic extension $\mE_\Og(\h)$ of a function $\h: \partial U \to \R^3$ to be the solution $\phi$ of the elliptic problem
\begin{eqnarray*}
\Delta^* \phi =0,~\x \in U,\quad \phi|_{\partial U} = \h.
\end{eqnarray*}
We also define the projection operator $\mathcal{P}_U :H^1(U) \to H_0(U)$ by $\mathcal{P}_U(\f) = \f - \mE_U(\f|_{\p U})$.

\begin{lemma} \label{L:proj} For any $\f \in H^1(U)$:
$$\|\mathcal{P}_U (\f)\|_{H(U)} \leq \|\f\|_{H(U)}. $$
\end{lemma}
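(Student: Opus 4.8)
The plan is to exploit the fact that $\mathcal{P}_U(\f)$ and $\mathcal{E}_U(\f|_{\p U})$ form an orthogonal decomposition of $\f$ with respect to the bilinear form $(\edot,\edot)_{H(U)}$. Concretely, write $\f = \mathcal{P}_U(\f) + \mE_U(\f|_{\p U})$; here $\mathcal{P}_U(\f) \in H_0(U)$ (it has zero trace by construction), while $\mE_U(\f|_{\p U})$ is elastic-harmonic, i.e. $\Delta^* \mE_U(\f|_{\p U}) = 0$ in $U$. I would then compute $\|\f\|_{H(U)}^2 = \|\mathcal{P}_U(\f)\|_{H(U)}^2 + 2\big(\mathcal{P}_U(\f), \mE_U(\f|_{\p U})\big)_{H(U)} + \|\mE_U(\f|_{\p U})\|_{H(U)}^2$ and show the cross term vanishes.

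The key step is the vanishing of the cross term. Since $\mathcal{P}_U(\f) \in H_0(U) \cong H^1_0(U)$, it can be approximated in the $H(U)$-norm by functions in $C_0^\infty(U)$, so by continuity of the bilinear form it suffices to treat $\g \in C_0^\infty(U)$ in place of $\mathcal{P}_U(\f)$. For such $\g$, Lemma~\ref{L:Int} applies with the roles reversed: taking $\f$ there to be $\mE_U(\f|_{\p U}) \in H^1(U)$ and $\g \in H^1_0(U)$, we get
$$\big(\g, \mE_U(\f|_{\p U})\big)_{H(U)} = \big(-\Delta^* \mE_U(\f|_{\p U}), \g\big)_{L^2(U)} = 0,$$
because $\Delta^* \mE_U(\f|_{\p U}) = 0$ by definition of the elastic extension. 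Passing to the limit gives $\big(\mathcal{P}_U(\f), \mE_U(\f|_{\p U})\big)_{H(U)} = 0$.

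With the cross term gone, $\|\f\|_{H(U)}^2 = \|\mathcal{P}_U(\f)\|_{H(U)}^2 + \|\mE_U(\f|_{\p U})\|_{H(U)}^2 \geq \|\mathcal{P}_U(\f)\|_{H(U)}^2$, which is exactly the claim. I expect the only real subtlety to be the density/continuity argument justifying the use of Lemma~\ref{L:Int}: one must check that $(\edot,\edot)_{H(U)}$ is continuous with respect to the relevant topology on $H_0(U)$ (it is, since $H_0(U) \cong H^1_0(U)$ and the form is bounded by the $H^1$-norm), and that $\mE_U(\f|_{\p U})$ genuinely lies in $H^1(U)$ so that the lemma is applicable — both of which are standard elliptic-regularity facts for the elastic system on the bounded domain $U$. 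Everything else is a one-line orthogonality computation.
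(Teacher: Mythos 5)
Your proof is correct and is essentially the same as the paper's: both hinge on the orthogonality $\big(\mathcal{P}_U(\f),\mE_U(\f|_{\p U})\big)_{H(U)}=0$, obtained from Lemma~\ref{L:Int} together with $\Delta^*\mE_U(\f|_{\p U})=0$, yielding the identity $\|\mathcal{P}_U(\f)\|_{H(U)}^2=\|\f\|_{H(U)}^2-\|\mE_U(\f|_{\p U})\|_{H(U)}^2$. The paper carries this out as a direct algebraic manipulation inside the $L^2$ pairing rather than as an explicit Pythagorean decomposition, and your added care about the density argument justifying Lemma~\ref{L:Int} for $\mathcal{P}_U(\f)\in H_0(U)$ is a point the paper glosses over.
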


\begin{proof} We have
\begin{eqnarray*} \|\mathcal{P}_U(\f) \|^2_{H(U)} &=& \left(\f - \mE_U(\f),\f - \mE_U(\f)  \right)_{H(U)} .\end{eqnarray*}
Applying Lemma~\ref{L:Int} and noting that $\Delta^* \mE_U(\f) =0$, we obtain
\begin{eqnarray*} \|\mathcal{P}_U(\f) \|^2_{H(U)} &=& \left(-\Delta^* (\f - \mE_U(\f)), \f - \mE_U(\f)  \right)_{L^2(U)} = \left(-\Delta^* (\f + \mE_U(\f)), \f - \mE_U(\f)  \right)_{L^2(U)} \\ &=& \left(\f + \mE_U(\f) , \f - \mE_U(\f)\right)_{H(U)} = \|\f\|^2_{H(U)} -  \|\mE_U(\f) \|^2_{H(U)} \leq \|\f\|_{H(U)}. \end{eqnarray*}

\end{proof}

We are now ready to describe the time reversal method (or Neumann series solution) for solving Problem~\ref{P:Main}.  Let $\phi = \mE(\g(T))$ and consider the time-reversal problem:
\begin{eqnarray*}  \left \{\begin{array}{l} \vv_{tt}(t,\x) - \Delta^* \vv(t,\x) =0,~ (t,\x)  \in (0,T) \times  \Og,\\ \vv|_{(0,T) \times \Gamma} = \g, \\[4 pt] \vv(T,\x) =\phi(\x),~ \uu_t(T,\x) = 0,~\x \in \Og, \end{array} \right.\end{eqnarray*}
This problem has a solution $\vv \in C(0,T;H^1(\Og)) \cap C^1(0,T;L^2(\Og))$ (see \cite[Lemma~A.1]{lin2008unique}).
Let us denote $A \g = \mathcal{P}_{\Og_0}(\vv(0)).$ The following theorem gives us a Neumann series to invert the mapping $\Lambda: \f \to \g$.

\begin{theorem}  \label{T:Main} Under Assumption~\ref{A:sharp}, the operator $K=\Id - A \Lambda$ is a contraction from $H_0(\Og_0)$ into itself.
Consequently, the function $\f$ can obtained from the Neumann series
$$\f= \sum_{j=0}^\infty K^j A \g.$$
\end{theorem}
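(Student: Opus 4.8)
The plan is to follow the Stefanov--Uhlmann strategy adapted to the elastic setting, showing that $K = \Id - A\Lambda$ is a contraction by comparing energies at time $0$ and time $T$. First I would set up an energy identity for the error: given $\f \in H_0(\Og_0)$ with $\g = \Lambda \f$, let $\uu$ solve \mref{E:TAT} and $\vv$ solve the time-reversal problem with $\vv|_{(0,T)\times \Ga} = \g$ and Cauchy data $(\mE(\g(T)),0)$ at time $T$. The difference $\ww = \uu - \vv$ solves the elastic wave equation on $(0,T)\times\Og$ with zero boundary data on $(0,T)\times\mS$, and $\ww(T) = \uu(T) - \mE(\uu(T)|_\mS)$, i.e. $\ww(T) = \mathcal{P}_\Og(\uu(T))$, with $\ww_t(T) = \uu_t(T)$. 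The key observation is that $K\f = \f - A\g = \mathcal{P}_{\Og_0}(\uu(0) - \vv(0)) = \mathcal{P}_{\Og_0}(\ww(0))$, since $\uu(0) = \f \in H_0(\Og_0)$ is already in the kernel of $\mE_{\Og_0}(\edot|_{\p\Og_0})$; actually one should be slightly careful since $\ww$ lives on $\Og$ not $\Og_0$, but $\supp \f \subset \Og_0$ and on $\Og\setminus\Og_0$ we can restrict. So $\|K\f\|_{H(\Og_0)} = \|\mathcal{P}_{\Og_0}(\ww(0))\|_{H(\Og_0)} \le \|\ww(0)\|_{H(\Og_0)} \le \|\ww(0)\|_{H(\Og)}$ by Lemma~\ref{L:proj}.

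Next I would bound $\|\ww(0)\|_{H(\Og)}^2 + \|\ww_t(0)\|_{L^2(\Og)}^2$, the total energy of $\ww$ on $\Og$ at time $0$, by the same quantity at time $T$. Since $\ww$ has zero Dirichlet data on the lateral boundary, the energy $E_\Og(\ww)(t) = \|\ww(t)\|_{H(\Og)}^2 + \|\ww_t(t)\|_{L^2(\Og)}^2$ is conserved (multiply the equation by $\ww_t$, integrate over $\Og$, integrate by parts using Lemma~\ref{L:Int}; the boundary term vanishes because $\ww_t|_\mS = 0$). Hence $E_\Og(\ww)(0) = E_\Og(\ww)(T) = \|\mathcal{P}_\Og(\uu(T))\|_{H(\Og)}^2 + \|\uu_t(T)\|_{L^2(\Og)}^2$. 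Now the crucial step: I must show this is strictly less than the full energy $E(\uu)(0) = \|\f\|_{H(\Og_0)}^2$ (which by finite speed of propagation and conservation equals $E_{\R^3}(\uu)(t)$ for all $t$, and also equals the energy of $\uu$ restricted to any domain containing the support at time $0$). The projection $\mathcal{P}_\Og$ strictly decreases the $H(\Og)$-seminorm by exactly $\|\mE_\Og(\uu(T)|_\mS)\|_{H(\Og)}^2$, so we get
$$E_\Og(\ww)(T) = E_{\R^3}(\uu)(T) - \|\mE_\Og(\uu(T)|_\mS)\|_{H(\Og)}^2 - \text{(energy of $\uu(T)$ outside $\Og$)}.$$
Since $E_{\R^3}(\uu)(T) = \|\f\|_{H(\Og_0)}^2$, we obtain $\|K\f\|_{H(\Og_0)}^2 \le \|\f\|_{H_0(\Og_0)}^2 - \delta(\f)$ where $\delta(\f) \ge 0$ collects the energy of $\uu$ outside $\Og$ plus the elastic-extension energy at time $T$.

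The main obstacle — and the heart of the improvement over \cite{tittelfitz12} — is upgrading the non-strict inequality $\|K\f\| \le \|\f\|$ to a uniform contraction $\|K\f\| \le (1-\epsilon)\|\f\|$ with $\epsilon$ independent of $\f$. The inequality $\|K\| \le 1$ alone does not give a convergent Neumann series; one needs either that $K$ is a contraction or that some power $K^m$ is. Here is where Assumption~\ref{A:sharp} enters: by propagation of singularities for the decoupled P/S system, every singularity $(\x,\xi) \in S^*\Og$ of $\f$ reaches $\mS$ along either its forward or backward P- or S-geodesic before time $T$, so a nontrivial portion of the wave energy has exited $\Og$ (or been registered in the lateral trace and hence contributes to $\mE_\Og(\uu(T)|_\mS)$) by time $T$. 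The standard way to make this quantitative, as in Stefanov--Uhlmann, is a compactness/contradiction argument: suppose no uniform $\epsilon$ exists; take a sequence $\f_n$ with $\|\f_n\|_{H_0(\Og_0)} = 1$ and $\|K\f_n\| \to 1$; extract a weakly convergent subsequence $\f_n \rightharpoonup \f$; show $K$ is in fact $\Id$ minus a compact-plus-contractive-type operator, or directly that $\delta(\f_n)\to 0$ forces, via energy estimates and unique continuation (Tataru-type for the elastic system, or Holmgren if coefficients are analytic — but here we want general smooth coefficients, so Tataru's theorem), that the limiting solution $\uu$ has zero lateral Cauchy data on $(0,T)\times\mS$ and zero energy outside $\Og$ at time $T$; then Assumption~\ref{A:sharp} combined with unique continuation forces $\f = 0$, contradicting $\|\f_n\| = 1$ after checking the convergence is strong enough (this requires showing $A\Lambda$ is compact, or rather that $K$ has no singular value equal to $1$, exploiting that the "lost" energy controls an $H^1$-norm via Korn and a trace/observability estimate). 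I expect the delicate points to be (i) justifying that $A\Lambda$ maps $H_0(\Og_0)$ compactly or at least that $1$ is not in its approximate point spectrum — one typically localizes and uses the fact that the geodesic flow spends only finite time in $\Og$ — and (ii) invoking the sharp unique continuation theorem of Tataru for the elastic wave operator $\p_t^2 - \Delta^*$, which is what removes the speed restriction $c_+ < 3c_-$ that the original microlocal/geometric-optics argument of \cite{tittelfitz12} needed.
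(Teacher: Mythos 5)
Your first half is exactly the paper's argument: the energy identity for $\ww=\uu-\vv$, conservation of $E_\Og(\ww,t)$ thanks to the vanishing lateral trace, Lemma~\ref{L:proj} applied twice, and the bookkeeping $E_\Og(\uu,T)=E_{\R^3}(\uu,0)-E_{\Og^c}(\uu,T)$ leading to $\|K\f\|^2_{H(\Og_0)}\le\|\f\|^2_{H(\Og_0)}-E_{\Og^c}(\uu,T)$. That part is correct and matches the paper.

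The gap is in the second half, which is the actual content of the theorem: you never establish the uniform lower bound $\|\f\|^2_{H(\Og_0)}\lesssim E_{\Og^c}(\uu,T)$, you only name it as the thing to be done and sketch a compactness/contradiction scheme whose key steps you defer (``I expect the delicate points to be\dots''). As sketched, the contradiction argument does not close: if $\f_n\rightharpoonup\f$ weakly with $\|\f_n\|=1$ and $\delta(\f_n)\to0$, concluding $\f=0$ from unique continuation contradicts nothing, since weak limits of unit vectors can vanish; to rule this out you need precisely a quantitative estimate with a compact remainder, which is what you are trying to avoid proving. The paper supplies the two missing ingredients explicitly. First, an observability estimate (Proposition/Lemma~\ref{L:lb}), $\|\f\|_{H^1(\Og)}\lesssim\|\uu(T)\|_{H^1(\Og^c)}+\|\uu_t(T)\|_{L^2(\Og^c)}+\|\uu\|_{L^2((0,T)\times\R^3)}$, proved by propagating $H^1$ regularity along both P- and S-bicharacteristics from $t=\pm T$ back to $t=0$ under Assumption~\ref{A:sharp}, plus microlocal elliptic regularity off the characteristic set and the closed graph theorem. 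Second, injectivity of $\f\mapsto(\uu(T),\uu_t(T))|_{\Og^c}$ (Lemma~\ref{L:Injective}), proved by a distance-peeling iteration that alternates the domain-of-dependence principle with unique continuation to push the vanishing region from far away all the way to $\mS$; this geometric iteration is exactly what removes the $c_+<3c_-$ restriction of \cite{tittelfitz12}, and nothing in your proposal replaces it. With these two lemmas, the compact term $\|\uu\|_{L^2((0,T)\times\R^3)}$ is absorbed by the standard ``injective plus compact'' lemma (\cite[Theorem~V.3.1]{taylor1981pseudodifferential}), and Korn's inequality on exterior domains converts $H^1(\Og^c)$ into $H(\Og^c)$ to produce $E_{\Og^c}(\uu,T)$ on the right-hand side. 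Without carrying out these steps your argument establishes only $\|K\|\le1$, which, as you yourself note, does not give convergence of the Neumann series.
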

Let us mention that the same result was obtained in \cite{tittelfitz12} under the assumption that $\frac{1}{3} c_+ <c_-$ and $T$ has to satisfy a much stronger condition than Assumption~\ref{A:sharp}. \footnote{Roughly speaking, the condition looks like $T > \frac{\diam(\Og)}{3 c_- - c_+}.$} Our proof follows closely that in \cite{tittelfitz12}. To avoid the aforementioned assumption, we use a simple geometric argument (distance peeling), presented in Lemma~\ref{L:Injective}. We also has to derive an observability estimate for the elastic wave equation (\ref{E:TAT}) in Proposition~\ref{L:lb}. 

\section{Proof of Theorem~\ref{T:Main}}

%
Let us start by recalling two essential results on the unique continuation and domain of dependence of the wave equation. The presented form was formulated in \cite{tittelfitz2015inverse}.
\begin{theorem}[Unique continuation principle] \label{T:Unique}
Suppose that $\lambda,\mu \in C^3(\R^3)$  and that for all $t \in (t_0,t_1)$, $\uu(t)=0$ in a neighborhood of $\{\x_0\}$. Then, $\uu=0$ for all $(t,\x)$ inside the double cone  $$\left \{(t,\x): \dist_{s}(\x,\x_0) + \left| t- \frac{t_0+t_1}{2} \right|  \leq \frac{t_1-t_0}{2} \right \}.$$
\end{theorem}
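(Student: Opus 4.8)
The plan is to compare the true solution $\uu$ of \mref{E:TAT} with the time-reversed solution $\vv$ through their difference, run a single conserved-energy identity exactly as in \cite{tittelfitz12}, and then extract the contraction constant from a sharp observability estimate. First I would set $\ww=\uu-\vv$ on $[0,T]\times\Og$. Since $\uu$ and $\vv$ solve the same elastic equation and share the boundary data $\g$ on $[0,T]\times\mS$, the difference solves \mref{E:TAT} in $\Og$ with the homogeneous Dirichlet condition $\ww|_{[0,T]\times\mS}=0$; hence $\ww_t|_{\mS}=0$ and, by \reflemm{L:Int}, the energy $E_\Og(\ww,t):=\|\ww(t)\|_{H(\Og)}^2+\|\ww_t(t)\|_{L^2(\Og)}^2$ is conserved. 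At $t=T$ the choice $\phi=\mE(\g(T))=\mE_\Og(\uu(T)|_\mS)$ gives $\ww(T)=\uu(T)-\mE_\Og(\uu(T)|_\mS)=\mathcal{P}_\Og(\uu(T))$ and $\ww_t(T)=\uu_t(T)$, while at $t=0$ we have $\ww(0)=\f-\vv(0)$; since $\f\in H_0(\Og_0)$ satisfies $\mathcal{P}_{\Og_0}\f=\f$, linearity of $\mathcal{P}_{\Og_0}$ yields $K\f=\mathcal{P}_{\Og_0}(\ww(0))$.

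Applying \reflemm{L:proj} on $\Og_0$ and then enlarging the domain of integration,
\[ \|K\f\|_{H_0(\Og_0)}^2=\|\mathcal{P}_{\Og_0}\ww(0)\|_{H(\Og_0)}^2\le\|\ww(0)\|_{H(\Og)}^2\le E_\Og(\ww,0)=E_\Og(\ww,T). \]
Combining this with the identity $\|\mathcal{P}_\Og\uu(T)\|_{H(\Og)}^2=\|\uu(T)\|_{H(\Og)}^2-\|\mE_\Og(\uu(T))\|_{H(\Og)}^2$ from the proof of \reflemm{L:proj}, and with conservation of the free-space energy $E_{\R^3}(\uu,\edot)\equiv\|\f\|_{H_0(\Og_0)}^2$, I obtain the master inequality
\[ \|K\f\|_{H_0(\Og_0)}^2\le\|\f\|_{H_0(\Og_0)}^2-\Big[\,E_{\R^3\setminus\Og}(\uu,T)+\|\mE_\Og(\uu(T))\|_{H(\Og)}^2\,\Big]. \]
This already gives $\|K\|\le1$; the entire problem is to bound the bracketed deficit below by a fixed fraction of $\|\f\|^2$.

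Since the bracket dominates its first term, it suffices to prove a uniform observability estimate — the content of \refprop{L:lb} — namely that there is $c\in(0,1)$ with $E_{\R^3\setminus\Og}(\uu,T)\ge c\,\|\f\|_{H_0(\Og_0)}^2$, i.e. a definite fraction of the energy has left $\Og$ by time $T$; substituting then gives $\|K\f\|^2\le(1-c)\|\f\|^2$. To prove this I would argue by contradiction and compactness: take $\f_n$ with $\|\f_n\|=1$ whose escaped energy tends to $0$, pass to a weak limit, and use compactness of the observation map into a weaker topology to obtain a limiting finite-energy solution that retains all of its energy in $\overline\Og$ on $[0,T]$ with vanishing transported Cauchy data at $\mS$. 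Here the strict inequality in Assumption~\ref{A:sharp} is essential: by compactness of $S^*\overline\Og$ it upgrades to a uniform margin $\sup_{(\x,\xi)}\min\{\tau_+^{p/s},-\tau_-^{p/s}\}=:T_0<T$, so every P- and S-bicharacteristic exits $\Og$ strictly before $T$. The unique continuation principle \reftheo{T:Unique}, fed into the distance-peeling scheme of \reflemm{L:Injective} — peeling $\overline\Og$ inward in thin shells measured in the slower S-metric, in each of which the first-arriving characteristic has already reached $\mS$ — then forces the limiting initial datum to vanish, contradicting $\|\f_n\|=1$.

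The main obstacle is precisely this last step: securing uniform observability at the sharp time $T>\ell_s(\Og)/2$ without the speed hypothesis $\frac13 c_+<c_-$ used in \cite{tittelfitz12}. The difficulty is that for a given $(\x,\xi)$ only one of the two modes may reach $\mS$ within the time budget, so the unique continuation cones attached to the two speeds $c_+$ and $c_-$ must be assembled layer by layer rather than compared globally; the distance-peeling argument of \reflemm{L:Injective} is designed to do exactly this and to replace the global speed comparison. Once $\|K\|\le1-c<1$ is established, $A\Lambda=\Id-K$ is boundedly invertible on $H_0(\Og_0)$ with $(A\Lambda)^{-1}=\sum_{j\ge0}K^j$; since $\g=\Lambda\f$ gives $A\g=(\Id-K)\f$, we recover $\f=\sum_{j=0}^\infty K^jA\g$, completing the proof of \reftheo{T:Main}.
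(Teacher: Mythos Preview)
Your proposal does not address the stated theorem. The statement in question is \reftheo{T:Unique}, the unique continuation principle for the elastic wave equation; in the paper this is not proved at all but is \emph{quoted} from the literature (the paper writes ``Let us start by recalling two essential results\ldots The presented form was formulated in \cite{tittelfitz2015inverse}''). What you have written is a proof sketch of \reftheo{T:Main}, the contraction/Neumann-series result, in which \reftheo{T:Unique} appears only as a black-box ingredient. In particular your argument sets up $\ww=\uu-\vv$, runs the conserved-energy identity, and aims for an observability estimate---none of which has anything to do with establishing that $\uu$ vanishes on the double cone $\{\dist_s(\x,\x_0)+|t-\tfrac{t_0+t_1}{2}|\le \tfrac{t_1-t_0}{2}\}$ from vanishing on a time-slab near $\x_0$.

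If the intent was to prove \reftheo{T:Unique} itself, a correct approach would be Carleman estimates for the elastic system (as in the references the paper cites), or reduction to Tataru-type unique continuation for systems of real principal type after the P/S decoupling; the energy/compactness machinery you describe is not the right tool. If instead you meant to prove \reftheo{T:Main}, then your outline is essentially the paper's own proof: same energy inequality via \reflemm{L:proj}, same reduction to $E_{\Og^c}(\uu,T)\gtrsim\|\f\|^2$, same two ingredients \refprop{L:lb} and \reflemm{L:Injective}. The one methodological difference is that you propose to obtain observability by a direct contradiction/compactness argument, whereas the paper first proves the weak estimate of \refprop{L:lb} via the closed graph theorem and propagation of singularities, and only then upgrades it by absorbing the compact $L^2$ term using injectivity (\reflemm{L:Injective}) and \cite[Theorem~V.3.1]{taylor1981pseudodifferential}.
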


\begin{theorem}[Domain of dependence principle] \label{T:Domain}
Assume that $\lambda,\mu \in C^1(\R^3)$. Then, $u$ has finite speed of propagation speed with maximum speed $c=c^+$. That is, for any $t_0 \in [0,T)$, if $\uu(t_0, \edot) = \uu_t(t_0,\edot)=0$ in $B_{c(t-t_0)}(\x_0)$ then $\uu=0$ in
$$\left \{(t,\x): t_0 \leq t \leq t_1, ~ \x \in B_{c(t_1-t)}(\x_0) \right\}.$$
\end{theorem}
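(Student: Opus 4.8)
The plan is to run the classical local energy–flux argument on the backward characteristic cone; the one delicate point is that the flux estimate must be closed with the sharp longitudinal speed $c_+=\sup_\x c_p(\x)=\sup_\x\sqrt{\lambda(\x)+2\mu(\x)}$ (and the spatial ball in the hypothesis is read as the base $B_{c_+(t_1-t_0)}(\x_0)$ of the cone). Write the strain $\epsilon(\uu)=\tfrac12(\nabla\uu+(\nabla\uu)^T)$ and stress $\sigma(\uu)=\lambda(\nabla\cdot\uu)\,I+2\mu\,\epsilon(\uu)$, so that $\Delta^*\uu=\nabla\cdot\sigma(\uu)$. For $t\in[t_0,t_1]$ set $B(t)=B_{c_+(t_1-t)}(\x_0)$ and
$$E(t)=\int_{B(t)} e(t,\x)\,d\x,\qquad e=\tfrac12|\uu_t|^2+\tfrac12\,\sigma(\uu):\epsilon(\uu).$$
Since $\sigma(\uu):\epsilon(\uu)=\lambda(\nabla\cdot\uu)^2+2\mu|\epsilon(\uu)|^2\ge0$ we have $e\ge0$, and the hypothesis $\uu(t_0)=\uu_t(t_0)=0$ on $B(t_0)$ gives $E(t_0)=0$. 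The goal is to show $E$ is non-increasing, so that $0\le E(t)\le E(t_0)=0$; this forces $\uu_t\equiv0$ and $\epsilon(\uu)\equiv0$ on the cone, and a final integration in $t$ returns $\uu\equiv0$.

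Differentiating by the transport theorem for the shrinking ball (whose boundary moves inward with normal speed $c_+$),
$$\frac{d}{dt}E(t)=\int_{B(t)}\partial_t e\,d\x-c_+\int_{\partial B(t)} e\,dS.$$
Using $\uu_{tt}=\nabla\cdot\sigma(\uu)$, the symmetry of $\sigma$, and $\partial_t\epsilon(\uu)=\epsilon(\uu_t)$, a direct computation puts the first integrand into divergence form, $\partial_t e=\nabla\cdot\big(\sigma(\uu)\,\uu_t\big)$; hence by a localized integration by parts (the boundary version of Lemma~\ref{L:Int}) $\int_{B(t)}\partial_t e\,d\x=\int_{\partial B(t)}\uu_t\cdot(\sigma(\uu)n)\,dS$, where $n$ is the outward unit normal. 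Therefore
$$\frac{d}{dt}E(t)=\int_{\partial B(t)}\Big[\uu_t\cdot(\sigma(\uu)n)-c_+\,e\Big]\,dS,$$
and the whole theorem reduces to the pointwise flux inequality $\uu_t\cdot(\sigma(\uu)n)\le c_+\,e$ on $\partial B(t)$.

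The crux is the pointwise bound
$$|\sigma(\uu)n|^2\le\big(\lambda+2\mu\big)\,\sigma(\uu):\epsilon(\uu)\le c_+^2\,\sigma(\uu):\epsilon(\uu),$$
which is precisely the statement that the largest characteristic speed of $\Delta^*$ is the P-wave speed $\sqrt{\lambda+2\mu}$. I verify it by resolving the symmetric tensor $\epsilon=\epsilon(\uu)$ in an orthonormal frame adapted to $n$: with $\alpha=n^T\epsilon\,n$, $\beta=|\epsilon n-\alpha n|$, and $\tau=\tr\epsilon-\alpha$, one gets $\sigma n=[(\lambda+2\mu)\alpha+\lambda\tau]\,n+2\mu\beta\,t$ for a unit $t\perp n$, so $|\sigma n|^2=[(\lambda+2\mu)\alpha+\lambda\tau]^2+4\mu^2\beta^2$, whereas $|\epsilon|^2\ge\alpha^2+2\beta^2+\tfrac12\tau^2$ gives $\sigma:\epsilon\ge\lambda(\alpha+\tau)^2+2\mu(\alpha^2+2\beta^2+\tfrac12\tau^2)$. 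Multiplying the latter by $\lambda+2\mu$ and subtracting $|\sigma n|^2$ leaves the manifestly nonnegative remainder $\mu(3\lambda+2\mu)\tau^2+4\mu(\lambda+\mu)\beta^2$. With Cauchy–Schwarz and Young's inequality this yields
$$\uu_t\cdot(\sigma n)\le|\uu_t|\,|\sigma n|\le c_+\,|\uu_t|\sqrt{\sigma:\epsilon}\le\tfrac{c_+}{2}\big(|\uu_t|^2+\sigma:\epsilon\big)=c_+\,e,$$
so $\tfrac{d}{dt}E\le0$. Finally $E\equiv0$ forces $\uu_t\equiv0$ and, as $\mu$ is bounded below, $\epsilon(\uu)\equiv0$ on every slice; since the slices are nested as $t$ increases, integrating $\uu_t=0$ from $t_0$ (where $\uu=0$ on $B(t_0)$) gives $\uu\equiv0$ on the cone.

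The main obstacle is the pointwise inequality $|\sigma n|^2\le(\lambda+2\mu)\,\sigma:\epsilon$: it is the genuinely sharp ingredient that lets $c_+$ replace the restrictive speed ratio of \cite{tittelfitz12}, and producing the clean nonnegative remainder is the only real algebra. The remaining difficulty is regularity: the identity $\partial_t e=\nabla\cdot(\sigma\uu_t)$ and the transport-theorem differentiation are only formal at the regularity $\uu\in C([0,T];H(\R^3))\cap C^1([0,T];L^2(\R^3))$ available for our solutions. I would make them rigorous either by mollifying the $C^1$ coefficients and approximating the Cauchy data by smooth data (the conclusion, being an inequality, survives the limit), or by replacing $\tfrac{d}{dt}E\le0$ with its integrated weak form over the solid cone, integrating by parts once against $\uu_t$ and bounding the lateral flux through $\partial B(t)$ by the same pointwise inequality.
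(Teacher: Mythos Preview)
Your proof is correct: the local energy argument on the backward cone with the sharp pointwise flux inequality $|\sigma(\uu)n|^2\le(\lambda+2\mu)\,\sigma(\uu):\epsilon(\uu)$ is the standard way to obtain finite speed of propagation with the optimal speed $c_+$ for the isotropic elastic system, and your algebraic verification of the nonnegative remainder $\mu(3\lambda+2\mu)\tau^2+4\mu(\lambda+\mu)\beta^2$ is clean and accurate. The regularity caveat you raise, and your two proposed remedies (mollification/approximation, or the integrated weak form over the solid cone), are appropriate.

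Note, however, that the paper does \emph{not} supply a proof of this statement at all: Theorem~\ref{T:Domain} is merely recalled as a known result, with the quoted formulation attributed to \cite{tittelfitz2015inverse}. So there is no ``paper's own proof'' to compare against; you have simply supplied what the paper omits.

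One contextual correction: your closing remark that this flux inequality is ``the genuinely sharp ingredient that lets $c_+$ replace the restrictive speed ratio of \cite{tittelfitz12}'' misreads the paper's contribution. The domain of dependence result with speed $c_+$ was already available in \cite{tittelfitz12,tittelfitz2015inverse}; what is new in the present paper is the distance--peeling argument of Lemma~\ref{L:Injective} and the observability estimate of Lemma~\ref{L:lb}, which together remove Tittelfitz's constraint $c_+<3c_-$ on the observation time. Theorem~\ref{T:Domain} is background, not the novelty.
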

Here is a simple corollary of Theorem~\ref{T:Domain} which will be used later in this article:
\begin{coro}\label{C:Domain}
Assume that $\lambda,\mu \in C^1(\R^3)$. For any $t_0 \in [0,T)$, if $(\uu(t_0), \uu_t(t_0)) \in H^1(B_{c(t-t_0)}(\x_0)) \times L^2(B_{c(t-t_0)}(\x_0)) $ then $\uu \in H^1$ in
$$\left \{(t,\x): t_0 \leq t \leq t_1, ~ \x \in B_{c(t_1-t)}(\x_0) \right\}.$$
\end{coro}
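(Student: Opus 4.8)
The plan is to reduce this regularity statement to the vanishing form already established in Theorem~\ref{T:Domain} by a linearity-plus-extension argument. Write $R = c(t_1-t_0)$ for the radius of the base of the cone, so that the hypothesis reads $(\uu(t_0),\uu_t(t_0)) \in H^1(B_R(\x_0)) \times L^2(B_R(\x_0))$. Since $B_R(\x_0)$ is a smooth bounded domain, the standard Sobolev extension theorem provides a pair $(\g_0,\g_1) \in H^1(\R^3) \times L^2(\R^3)$ whose restriction to $B_R(\x_0)$ coincides with $(\uu(t_0),\uu_t(t_0))$.

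Next I would let $\ww$ be the solution of the elastic wave equation $\ww_{tt} - \Delta^*\ww = 0$ on $\R^3$ with Cauchy data $(\ww(t_0),\ww_t(t_0)) = (\g_0,\g_1)$. Because these data now lie in $H^1(\R^3) \times L^2(\R^3)$, the standard energy theory for the equation in (\ref{E:TAT}) yields $\ww \in C([t_0,t_1];H^1(\R^3)) \cap C^1([t_0,t_1];L^2(\R^3))$; in particular $\ww \in H^1$ on the whole space-time cylinder $[t_0,t_1]\times\R^3$. The only point requiring care is that the conserved energy controls the full gradient $\nabla\ww$ and not merely its symmetric part. Over the full space this is automatic: for $\ww \in H^1(\R^3)$ an integration by parts gives $\int_{\R^3}|\frac12(\nabla\ww+(\nabla\ww)^T)|^2 = \frac12\int_{\R^3}|\nabla\ww|^2 + \frac12\int_{\R^3}|\nabla\cdot\ww|^2 \ge \frac12\int_{\R^3}|\nabla\ww|^2$, so $\|\edot\|_{H(\R^3)}$ dominates the $H^1$ seminorm and the energy estimate indeed bounds $\ww(t)$ in $H^1(\R^3)$ uniformly on $[t_0,t_1]$.

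Now I would set $\vv = \uu - \ww$. By linearity $\vv$ solves the homogeneous elastic wave equation, and by construction its Cauchy data vanish on $B_R(\x_0)$, namely $\vv(t_0) = \uu(t_0)-\g_0 = 0$ and $\vv_t(t_0) = \uu_t(t_0)-\g_1 = 0$ there. Theorem~\ref{T:Domain}, applied with base ball of radius $R = c(t_1-t_0)$, then forces $\vv \equiv 0$ on the cone $\{(t,\x): t_0 \le t \le t_1,\ \x \in B_{c(t_1-t)}(\x_0)\}$. Hence $\uu = \ww$ throughout the cone, and since $\ww$ is $H^1$ there the same holds for $\uu$, which is exactly the claim.

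I expect no serious obstacle here: this is the classical device of representing a solution with only locally regular Cauchy data by a globally regular comparison solution and invoking finite speed of propagation. The single technical check is the energy-to-$H^1$ bound for $\ww$ noted above, which the full-space Korn identity settles cleanly given that $\mu$ is bounded below; beyond that I would only need to quote correctly the Sobolev extension theorem and the well-posedness in $H^1 \times L^2$ for the elastic wave equation, both of which are standard (the latter as in the references already invoked for (\ref{E:TAT})).
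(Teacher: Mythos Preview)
The paper does not actually prove this corollary; it is stated without argument as a ``simple corollary'' of Theorem~\ref{T:Domain}. Your extension-plus-comparison argument is exactly the standard way to deduce such a regularity statement from the vanishing form of finite propagation speed, and it is correct. The only implicit assumption worth making explicit is that $\uu$ itself is a (possibly low-regularity) solution of the elastic wave equation so that $\vv=\uu-\ww$ is again a solution to which Theorem~\ref{T:Domain} applies; in the paper's usage (inside the proof of Lemma~\ref{L:lb}) $\uu$ is an $L^2$ solution, and finite speed of propagation for such distributional solutions follows from the $H^1\times L^2$ case by a routine density/duality argument.
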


The following two lemmas contain our main ingredients for the proof of Theorem~\ref{T:Main}.

\begin{lemma} \label{L:lb} Under the visibility condition in the form of Assumption \ref{A:sharp}, 
$$\|\f\|_{H^1(\Og)} \lesssim \|\uu(T)\|_{H^1(\Og^c)} + \|\uu_t(T)\|_{L^2(\Og^c)}+  \|\uu\|_{L^2((0,T)\times \R^3)}. $$
\end{lemma}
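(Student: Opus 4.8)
The plan is to establish the observability estimate by combining a qualitative unique-continuation argument with a compactness/contradiction scheme, in the spirit of the classical approach to observability for the wave equation. Let me write this out.

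First, I would argue that the right-hand side vanishing forces $\f = 0$; this is the injectivity statement that should be contained in Lemma~\ref{L:Injective} (the "distance peeling" lemma mentioned in the introduction). Suppose $\uu(T) = 0$ on $\Og^c$ (both in $H^1$ and the velocity in $L^2$) and $\uu \equiv 0$ on $(0,T)\times\R^3$... wait, that last one already gives $\f = \uu(0) = 0$ trivially. So the real content must be weaker: the estimate is genuinely an a priori inequality, and the contradiction argument runs on the assumption that no such constant exists.

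So here is the scheme I would carry out. Negate the conclusion: suppose there is a sequence $\f_n \in H_0(\Og_0)$ with $\|\f_n\|_{H^1(\Og)} = 1$ but $\|\uu_n(T)\|_{H^1(\Og^c)} + \|\partial_t\uu_n(T)\|_{L^2(\Og^c)} + \|\uu_n\|_{L^2((0,T)\times\R^3)} \to 0$, where $\uu_n$ solves (\ref{E:TAT}) with data $\f_n$. By energy conservation for the elastic wave equation, the full energy of $\U_n(t)$ in $H(\R^3)\times L^2(\R^3)$ is bounded uniformly in $n$ and $t$; combined with finite speed of propagation (Theorem~\ref{T:Domain}), the solutions $\uu_n$ on $[0,T]\times\R^3$ live in a fixed compact region and are bounded in the relevant energy space, so after passing to a subsequence $\f_n \rightharpoonup \f$ weakly in $H_0(\Og_0)$ and $\uu_n \to \uu$ in an appropriate weak-$*$ sense, with $\uu$ solving (\ref{E:TAT}) with data $\f$. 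The hypotheses pass to the limit to give $\uu \equiv 0$ on $(0,T)\times\R^3$ — which immediately forces $\f = 0$. The work is then to upgrade weak convergence of $\f_n \rightharpoonup 0$ to strong convergence $\|\f_n\|_{H^1(\Og)}\to 0$, contradicting the normalization. This is where the compactness of the embedding $H^1(\Og_0)\hookrightarrow L^2(\Og_0)$ enters: $\|\f_n\|_{L^2}\to 0$, so it suffices to control the $H^1$ seminorm by lower-order terms plus the (vanishing) boundary data, which one extracts from the energy identity / multiplier estimate for the wave equation over the cylinder $(0,T)\times\Og$.

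Alternatively — and this is probably closer to the intended proof — one can avoid the contradiction argument and instead use Assumption~\ref{A:sharp} directly via the geometric control / microlocal defect measure viewpoint: every singularity $(\x,\xi)\in S^*\Og$ of $\f$ propagates along a P- or S-geodesic that reaches $\mS$ within time $T$ (in forward or backward time), hence is "seen" either in the interior data $\uu$ on $(0,T)\times\R^3$ or, after exiting $\Og$, in the Cauchy data $\uu(T), \partial_t\uu(T)$ on $\Og^c$ (since a singularity leaving $\Og$ before time $T$ shows up in $\uu(T)$ supported outside $\Og$, by finite speed of propagation and Corollary~\ref{C:Domain}). Quantifying this requires either a propagation-of-microlocal-defect-measures argument or a careful partition of $S^*\Og$ according to which of the four exit times $\tau^{p/s}_\pm$ is smallest, with the corresponding energy estimate localized along tubes around those geodesics. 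I would structure the proof as: (i) reduce to showing $\f\mapsto\big(\uu(T)|_{\Og^c},\partial_t\uu(T)|_{\Og^c},\uu|_{(0,T)\times\R^3}\big)$ is injective with closed range on $H_0(\Og_0)$; (ii) prove injectivity using the unique continuation principle (Theorem~\ref{T:Unique}) together with the distance-peeling argument of Lemma~\ref{L:Injective}, which handles the possibility that a given singularity is carried only by, say, the slow S-mode whose exit time might otherwise be large; (iii) conclude the quantitative bound by the standard compactness-uniqueness argument as above, where the only non-routine input is that the limiting solution inherits enough vanishing to apply uniqueness.

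The main obstacle I anticipate is step (ii)/(iii): the elastic system is not scalar, so "the solution vanishes on $(0,T)\times\R^3$" must be leveraged mode by mode, and one must ensure that the P- and S-components cannot conspire to hide a singularity — this is exactly what Assumption~\ref{A:sharp} (with its $\min$ over the P/S exit times, both forward and backward) is designed to rule out, but translating the geometric statement into the functional-analytic vanishing needed for the compactness argument is delicate. A secondary technical point is the regularity hypothesis: Theorems~\ref{T:Unique} and~\ref{T:Domain} require $\lambda,\mu\in C^3$ resp.\ $C^1$, so if the lemma is to be stated for merely bounded Lamé parameters one needs an approximation argument, but I would simply assume the smoothness needed for unique continuation and note it.
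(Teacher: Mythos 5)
You have correctly located the relevant ingredients (finite speed of propagation, propagation of singularities along the $p$ and $s$ bicharacteristics, the role of Assumption~\ref{A:sharp} with its forward/backward exit times), but the step that actually produces the inequality is missing, and the two routes you sketch for it do not work as stated. Your compactness--contradiction scheme is circular at the decisive point: after extracting a weak limit you need to upgrade $\f_n \rightharpoonup 0$ to $\|\f_n\|_{H^1(\Og)} \to 0$, and you propose to do this by ``controlling the $H^1$ seminorm by lower-order terms plus the vanishing boundary data'' via an energy/multiplier identity over $(0,T)\times\Og$ --- but such a control \emph{is} the observability estimate being proven (and a multiplier estimate for the two-speed elastic system is exactly where Tittelfitz's speed restriction originated). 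Note also that the statement of Lemma~\ref{L:lb} already carries the compact error term $\|\uu\|_{L^2((0,T)\times\R^3)}$ on the right; it is this lemma, combined with the injectivity of Lemma~\ref{L:Injective} and the standard ``injective plus compact error'' lemma (Taylor, Theorem~V.3.1), that later removes the error term in the proof of Theorem~\ref{T:Main}. Injectivity therefore plays no role in the proof of Lemma~\ref{L:lb} itself, contrary to your steps (i)--(iii).

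The device the paper uses to quantify the microlocal picture --- and which your proposal lacks --- is the closed graph theorem. One first extends $\uu$ evenly to $(-T,T)$ (using $\uu_t(0)=0$, so that both exit times $\pm\tau_\pm^{p/s}$ in Assumption~\ref{A:sharp} become usable), and then proves a purely \emph{qualitative} claim: if $\Box^*\uu=0$, $\uu\in L^2((-T,T)\times\R^3)$, and $\U(\pm T)|_{\Og^c}\in H^1\times L^2$, then $\U(0)|_{\Og_0}\in H^1(\Og_0)\times L^2(\Og_0)$. This follows from Corollary~\ref{C:Domain} (which converts the $H^1$ regularity of the Cauchy data on $\Og^c$ at $t=\pm T$ into local $H^1$ regularity of $\uu$ in small space-time neighborhoods just outside $\Og$), propagation of singularities back along each $p/s$ bicharacteristic to $t=0$, microlocal elliptic regularity off the characteristic set, and an energy estimate. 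The inclusion of the space $\mathcal X$ of such solutions into the space $\mathcal Y$ of solutions with $H^1\times L^2$ Cauchy data at $t=0$ then has closed graph, hence is bounded, and boundedness is precisely the asserted inequality. No defect measures, tube-localized energy estimates, or contradiction argument are needed; the entire quantitative content is delegated to the closed graph theorem. I would encourage you to rework the proof around this claim-plus-closed-graph structure.
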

There are several possible approaches for the proof. Our proof below share some similarity with that of \cite[Lemma 7]{chervova2016time}.
\begin{proof}
First, since $\uu_t(0) = 0$, one may do an even extension of $\uu$ to $(-T,T) \times \R^3$. Then, $\uu \in C(\R, H^1(\R^3)) \cap C^1(\R, L^2(\R^3))$ satisfies the wave equation in $\R$ for all $t$. In particular $\U(-T)$ also belongs to $H^1(\Omega^c)\times L^2(\Omega^c)$ as well.

In order to prove Lemma~\ref{L:lb}, we will make use of the closed graph theorem. To this end, let us define
\begin{align*}
\mathcal{X}:= \{\uu \in L^2((-T,T) \times \R^n): \Box^* \uu = 0,~ \U(\pm T)|_{\Omega^c} \in H^1(\Omega^c) \times L^2(\Omega^c) \},
\end{align*} where $\Box^*= \p_{tt} -\Delta^*$ is the elastic wave operator.
Initially, one merely has the inclusion map $\iota: \mathcal{X} \to  L^{2}((-T,T)\times \R^n)$. However, we will prove the the following claim:

\[
\iota: \mathcal{X} \to \mathcal{Y}:= \{
\uu \in L^2((-T,T) \times \R^n): \Box^* \uu = 0,~(\uu(0),\uu_t(0)) \in H^1(\Og_0)\times L^2(\Og_0)
\}.
\]
Once the claim is proved, one can easily see that $\iota: \mathcal{X} \to \mathcal{Y}$ has closed graph (since their norms are stronger than the $L^2((-T,T) \times \R^n)$ norm). By applying the closed graph theorem, we obtain $\iota$ is a bounded operator. That is,
\[
\| \uu \|_{\mathcal Y} \lesssim \|\uu\|_{\mathcal X}
\]
implying that
\begin{equation*}
\| \uu(0)\|_{H^1(\Og_0)} + \|\uu'(0)\|_{L^2(\Og_0)} \lesssim
\|\uu(\pm T)\|_{H^1(\Omega^c)} + \|\uu'(\pm T)||_{L^2(\Omega^c)} + \|\uu\|_{L^2((-T,T)\times \R^n)}.
\end{equation*}
Since $\uu$ is even in $t$ and $\uu'(0) = 0$, we obtain the observability estimate in the statement of the proposition.

Let us now proceed to prove the claim. To this end, we will analyze the $H^1$ regularity of $\uu \in \mathcal{X}$. Let $(\x,\xi) \in S^*\Omega$, from Assumption~\ref{A:sharp}, there exists $\eps>0$ such that either $\gamma^{p/s}_{\x,\xi}(T) \in  \Og'_\eps$ or $\gamma^{p/s}_{\x,\xi}(-T) \in  \Og'_\eps$, where $\Og'_\eps :=\{\x \in \R^3: \dist(\x,\Og) >\eps\}$\footnote{Here the distance is in the Euclidean metric.}. From the finite speed of propagation (see Corollary~\ref{C:Domain}) and the fact that $\U(\pm T) \in H^1(\Og^c) \times L^2(\Og^c)$, we obtain $\uu \in H^1((T-(\eps/c_+),T+(\eps/c_+)) \times \Og_\eg)$ and $\uu \in H^1((-T-(\eps/c_+),- T+(\eps/c_+)) \times \Og_\eg)$.  Therefore, $\uu$ is microlocally in $H^1$ on the bicharacteristic $(t,\tau = \pm |\xi|_{p/s},\gamma^{p/s}_{\x,\xi}(t),\dot{\gamma}^{p/s}_{\x,\xi}(t))$ near either $t= T$ or $t=-T$. Propagating both the $p$ and $s$ bicharactersitics implies $\uu$ is microlocally in $H^1$ on the whole aforementioned bicharacteristic (see, e.g., \cite[Corollary 1.3]{taylor1978propagation} and also \cite{dencker1982propagation,hansen2003propagation,rachele2000inverse}). In particular, $\uu$ is in $H^1$ near $(0,\tau = \pm |\xi|_{p/s},\x,\xi)$. Since this holds for each $\xi$, $\uu$ is microlocally in $H^1$ near the portion of $\Sigma_{\Box^*}$ around $t=0$ and $\x\in \Omega$.  Here, $\Sigma_{\Box^*}$ is the set of characteristic points for the elastic wave operator $\Box^*$: $$\Sigma_{\Box^*} = \{(t,\x,\tau,\xi) \in T^*(\R \times \Omega)\setminus 0; \tau^2 = |\xi|^2_{p/s}\}.$$
Outside of $\Sigma_{\Box^*}$, $\Box^*$ is elliptic. Therefore, $\uu$ belongs to $H^1$ microlocally at such points, by microlocal elliptic regularity. Thus, $\uu \in H^1$ locally near $\{0\} \times \Omega$. By energy estimates for the elastic wave equation, this implies that the Cauchy data $\U(0)|_{\Og_0}$ is in the desired space. This proves the claim.

\end{proof}

\begin{lemma} \label{L:Injective}
The mapping $\f \to \big(\uu(T), \uu_t(T)\big)|_{\Og^c}$ is injective.
\end{lemma}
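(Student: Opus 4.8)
The plan is to argue by linearity: since $\f\mapsto(\uu(T),\uu_t(T))|_{\Og^c}$ is linear, it suffices to show that $(\uu(T),\uu_t(T))|_{\Og^c}=0$ forces $\f=0$. As in the proof of \reflemm{L:lb} I would first extend $\uu$ evenly in $t$; because $\uu_t(0)=0$ this produces a solution of the elastic wave equation on $\R\times\R^3$ with $(\uu(\pm T),\uu_t(\pm T))|_{\Og^c}=0$. Two pieces of vanishing are then free. Since $\supp\f\subset\Og_0\Subset\Og$, the Cauchy data at $t=0$ vanishes on $\R^3\setminus\Og_0$, so by the domain of dependence principle (\reftheo{T:Domain}, \refcoro{C:Domain}) $\uu$ vanishes near $(t,\x_0)$ whenever $\x_0\notin\Og_0$ and $|t|<\dist(\x_0,\Og_0)/c_+$ (on the appropriate forward and backward cones). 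Likewise the vanishing of the Cauchy data on $\Og^c$ at $t=\pm T$ propagates by finite speed to give that $\uu$ vanishes near $(t,\x_0)$ whenever $\x_0\in\Og^c$ and $|t\mp T|<\dist(\x_0,\Og)/c_+$.

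The engine of the proof is a \emph{distance peeling} scheme that alternates the domain of dependence with the sharp unique continuation principle \reftheo{T:Unique}. Fix $\x_0\in\Og^c$. If $\dist(\x_0,\Og)$ is large enough, the time interval about $t=0$ coming from the initial data and the two intervals about $t=\pm T$ coming from the exterior data overlap — here one uses crucially that $\dist(\x_0,\Og_0)\ge\dist(\x_0,\Og)+\dist(\Og_0,\partial\Og)$ with $\dist(\Og_0,\partial\Og)>0$ — so that $\uu$ vanishes near $\x_0$ on a full time interval symmetric about $0$. Applying \reftheo{T:Unique} on that interval makes $\uu(0,\cdot)=\f$ vanish on a $d_s$-ball about $\x_0$ that reaches a definite distance into $\Og$. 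I would then feed this back in: $\f$ (together with $\uu_t(0)\equiv0$) is now known to vanish on a strictly larger open set, which enlarges the central time intervals, moves the admissible $\x_0$ closer to $\Og$, and lets \reftheo{T:Unique} peel off a further layer. Iterating, the vanishing region of $\f$ grows; since Assumption~\ref{A:sharp} is exactly $T>\ell_s(\Og)/2$ and $\ell_s(\Og)/2$ dominates the inradius of $\Og$ in the metric $d_s$, the procedure eventually covers $\overline{\Og_0}$, whence $\f\equiv0$.

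The step I expect to be delicate — and where the two-speed structure of the elastic operator genuinely enters — is that the domain-of-dependence step spreads at the P-speed $c_p$ while the unique continuation in \reftheo{T:Unique} advances only in the slow metric $d_s=c_s^{-2}d\x^2$; a naive accounting then loses a factor and the peeling only closes under the extra restriction $c_+<3c_-$ of \cite{tittelfitz12}. To avoid this I would run the peeling mode by mode, using the microlocal P/S decomposition of $\uu$ that already underlies the propagation-of-singularities argument in the proof of \reflemm{L:lb}: for each mode the finite-speed step and the unique-continuation step are governed by one and the same metric $d_p$, resp.\ $d_s$, so the iteration for the $p$-mode closes once $T$ exceeds half the length of the longest $p$-geodesic in $\overline\Og$, and for the $s$-mode once $T>\ell_s(\Og)/2$; since the P-speed dominates the S-speed, the latter is the binding condition and is precisely Assumption~\ref{A:sharp}. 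Controlling the coupling between the two modes and arranging the iteration so that it really terminates at the sharp time is the technical heart of the argument.
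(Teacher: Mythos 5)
Your overall skeleton --- reduce to showing the kernel is trivial, extend $\uu$ evenly in $t$, and then alternate the domain-of-dependence principle (\reftheo{T:Domain}) with the unique continuation principle (\reftheo{T:Unique}) in a ``distance peeling'' iteration --- is exactly the paper's strategy, and you correctly identify the crux: the finite-speed step spreads at $c_+$ while \reftheo{T:Unique} only advances in the slow metric $d_s$, which is what produced the restriction $c_+<3c_-$ in \cite{tittelfitz12}.

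However, your proposed way around that obstruction --- running the peeling ``mode by mode'' via the microlocal P/S decomposition --- does not work, and this is a genuine gap. The decomposition of $\uu$ into $p$- and $s$-modes is only microlocal, i.e.\ defined modulo smoothing operators; a single mode never literally vanishes on an open set, so the hypothesis of \reftheo{T:Unique} (exact vanishing of $\uu$ on a neighborhood of $\x_0$ for a time interval) cannot be verified for one mode alone, and likewise there is no exact domain-of-dependence statement at speed $c_p$ or $c_s$ for an individual mode. Unique continuation across non-characteristic surfaces is not a microlocal statement and cannot be decoupled this way. The paper's actual resolution is much more elementary and requires no decomposition: the peeling is run \emph{entirely in the exterior} $\Og^c$. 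One first gets $\uu=0$ on $K_\pm=\{(t,\x):\x\in\Og^c,\ \dist(\x,\mS)\ge c_+|t\mp T|\}$, picks $\x_0$ at distance $c_+T+\eps$ from $\Og$, applies \reftheo{T:Unique} on $[-2T,2T]$ to kill $\uu$ on a $d_s$-cone about $\x_0$, and iterates; because the successive cones have parallel edges, the distances $\overline{EM_i}$ from the new vertices to $\mS$ satisfy $\overline{EM_{i+1}}/\overline{EM_i}=\mathrm{const}<1$ for \emph{any} $0<c_-\le c_+$, so they converge geometrically to the boundary and one concludes $\uu=0$ on $[-T,T]\times\Og^c$. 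Only then is \reftheo{T:Unique} applied once more, from the boundary inward, giving $\uu=0$ on $\{\dist_s(\x,\mS)+|t|\le T\}$, which under Assumption~\ref{A:sharp} ($\dist_s(\x,\mS)<T$ for all $\x\in\Og$) covers $\{0\}\times\Og$ and yields $\f=0$. Your version, which tries to grow the zero set of $\f$ inward into $\Og$ directly from faraway base points, is precisely the accounting that loses the factor of three; the fix is the geometry of the exterior iteration, not a modal splitting.
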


\begin{proof}
Assuming that $\big(\uu(T), \uu_t(T)\big)|_{\Og^c} \equiv 0$, let us prove $\f \equiv 0$. Let us extend $\uu$ as an even function in the time variable $t$. Then, $\uu$ satisfies the elastic wave equation for all time $t \in \R$.  Our main idea is to prove the claim: $\uu=0$ on $[-T,T] \times \Og^c$. Then applying the unique continuation principle (Theorem~\ref{T:Unique}) we obtain $\uu(t,\x) =0$ for all $(t,\x)$ such that $\dist_s(\x,\mS) + |t| \leq T$. This implies $\f = \uu(0) = 0$ on $\Og$.\footnote{Assumption~\ref{A:sharp} implies $\dist_s(\x,\mS) <T$.}

Let us now proceed to prove the claim, for which we make use of the domain of dependence principle (Theorem~\ref{T:Domain}), unique continuation principle (Theorem~\ref{T:Unique}), and a simple geometric (distance peeling) argument. From the domain of dependence principle (Theorem~\ref{T:Domain}), we obtain
$\uu(t,\x) =0$ for all $$K_+ = \{(t,\x): \x \in \Og^c, \dist(\x,\mS:=\pdh \Og) \geq c_+ |t-T|\}.$$
Using the same argument as above for negative time, we obtain
$u(x,t) =0$ for all $$K_- = \{(t,\x): \x \in \Og^c, \dist(\x,\mS) \geq c_+ |t + T|\}.$$
We deduce that $\uu \equiv 0$ on $K = K_+ \cup K_-$. This in particular, implies $\uu=0$ for all $(t,\x) \in [- 2T, 2T] \times \Og^c$ such that $\dist(\x, \mS) \geq c_+ T$. Take $\x_0 \in \Og^c$ be such $\dist(\x_0,\Og) = c_+ T + \eps$. Since $\uu \equiv 0$ on a neighborhood of  $[-2T,2T] \times \{\x_0 \}$, applying the unique continuation principle, we have $\uu \equiv 0$ on
$$O_0 = \{(t,\x): \frac{\dist(\x,\x_0) }{c_-} + |t| \leq 2T\}. $$
That is, $\uu$ vanishes on union $K_+ \cup K_- \cup O_0$. For the sake of visualization, we consider the one dimensional picture (the realistic three dimensional scenario follows in the same manner). Then, the regions $K_+,K_-,O_0$ are visualized as in Fig~\ref{F:domain}~(a). Sending $\eps \to 0$ we obtain that $\uu$ vanishes on the union of $K_\pm$ and the grey triangle in Fig~\ref{F:domain}~(b). This union, in particular, contains the green line segment and its translation to the right (i.e., away from the domain $\Og$). Repeating the same argument, we obtain $\uu$ vanishes on the green and, the, orange triangles, see Fig~\ref{F:domain}~(c).

\begin{figure}[t]
\begin{center}
\begin{tabular}{ccc}
\includegraphics[width=1.5in]{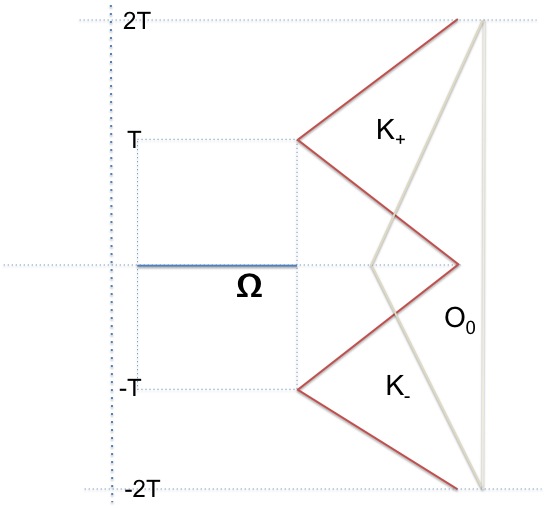} &
\includegraphics[width=1.5in]{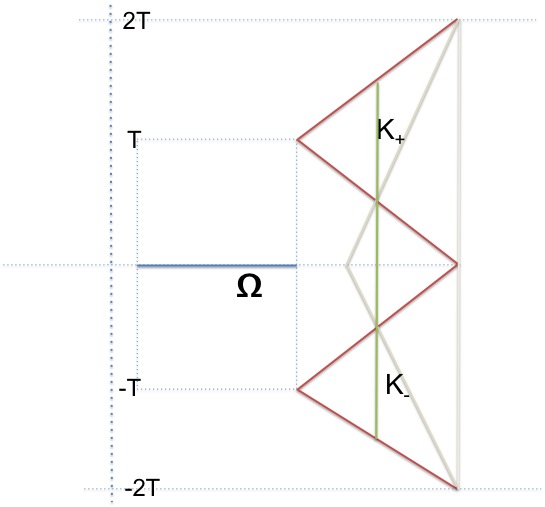} & 
\includegraphics[width=1.5in]{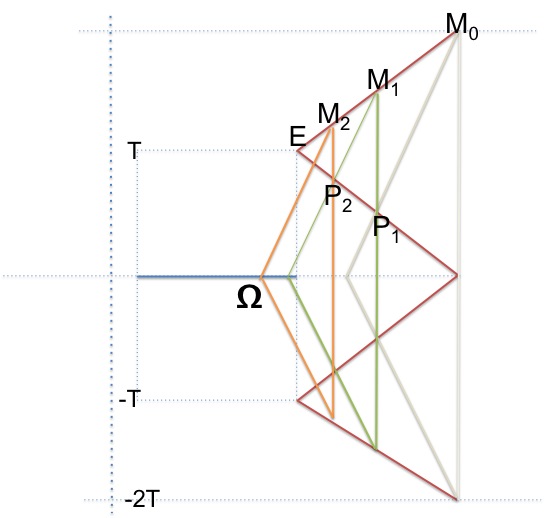} \\
(a)  & (b) & (c) \\
\end{tabular}
\end{center}
\caption{Distance peeling argument: (a) $\uu$ vanishes on the grey triangle due to the unique continuation principle (b) the limit when $\eps \to 0$ (c) repeat the unique continuation argument to move closer to the $\mS$.}
\label{F:domain}
\end{figure}

Continuing the process, we obtain a sequence of triangles on which $\uu$ vanishes. We only need to prove that the vertical edge of these triangles converges to the boundary. Let us first note that since the edges of these triangles are parallel, we obtain
$$\overline{E M_{2}}/\overline{E M_1} =  \overline{E P_{2}}/\overline{E P_1}= \overline{E M_{1}}/\overline{E M_0}.$$
In general, repeating the same argument, we obtain $\overline{E M_{i+1}}/\overline{E M_i}$ is independent of $i$, where $M_i$ is the highest vertex of the $(i+1)^{th}$ triangle. Therefore, $\lim_{i \to \infty} \overline{E M_{i}}=0$. This finishes our proof.
\end{proof}

\begin{proof}[\bf Proof of Theorem~\ref{T:Main}]
Let $U \subset \R^3$ and $\uu(t): U \to \R^3$, we denote
$$E_U(\uu,t)= \|\uu(t)\|_{H(U)}^2 + \|\uu_t(t)\|^2_{L^2(U)}.$$
Simple integration by parts shows that $E_{\R^3}(\uu,t)$ is independent of time. It is called the quadratic energy of the solution $\uu$.

Let us denote $\ww = \uu - \vv$, we obtain $\ww \in C(0,T; H_0^1(\Og)) \cap C^1(0,T; L^2(\Og))$. Simple integration by parts shows
$$E_{\Og}(\ww,0)= E_{\Og}(\ww,T) .$$
Noting that $\ww(T) = \mathcal{P}_\Og(\uu(T))$ and applying Lemma~\ref{L:proj}, we obtain
$$\|\ww(T)\|^2_{H(\Og)} \leq \|\uu(T)\|^2_{H(\Og)}.$$
Since $\ww_t(T) = \uu_t(T)$, we arrive at $E_{\Og}(\ww,T) \leq E_{\Og}(\uu,T)$ and hence 
\begin{eqnarray*}E_{\Og}(\ww,0) \leq E_{\Og}(\uu,T).\end{eqnarray*} 
Noting that $(\Id - A \Lambda) \f=\mathcal{P}_{\Og_0} (\ww(0))$ and applying Lemma~\ref{L:proj}, we obtain
$$\|(\Id - A \Lambda) \f\|^2_{H(\Og_0)} \leq E_{\Og_0}(\ww,0) \leq E_{\Og}(\ww,0),$$
which gives $$\|(\Id - A \Lambda) \f\|^2_{H(\Og_0)} \leq E_\Og(\uu,T).$$
We note that
$$E_\Og(\uu,T) = E_{\R^3} (\uu,T) - E_{\Og^c} (\uu,T) = E_{\R^3} (\uu,0) - E_{\Og^c} (\uu,T) = \|\f\|_{H(\Og_0)}^2 - E_{\Og^c} (\uu,T).$$
Therefore,
$$\|(\Id - A \Lambda) \f\|^2_{H(\Og_0)} \leq \|\f\|_{H(\Og_0)}^2 - E_{\Og^c} (\uu,T). $$
It now remains to prove that
$$\|\f\|^2_{H(\Og_0)} \lesssim \, E_{\Og^c}(\uu,T).$$
Recall from Lemma~\ref{L:lb}
$$\|\f\|_{H^1(\Og_0)} \lesssim  \|\uu(T)\|_{H^1(\Og^c)} +  \|\uu_t(T)\|_{L^2(\Og)}  + \|\uu\|_{L^2((0,T) \times \R^3)} .$$
Noting that $\|\f \|_{H^1(\Og_0)} \cong \|\f\|_{H(\Og_0)}$, we obtain
$$\|\f\|_{H(\Og_0)} \lesssim \|\uu(T)\|_{H(\Og^c)} + \|\uu_t(T)\|_{L^2(\Og)}  + \|\uu\|_{L^2((0,T) \times \R^3)}.$$
Let us note that the mapping $\f \to \uu$ is compact from $H_0(\Og_0)$ to $L^2((0,T) \times \R^3)$ and the mapping $\f \to (\uu(T),\uu_t(T))$ is injective from $H_0(\Og_0)$ to $H^1(\Og^c) \times L^2(\Og^c)$ (Lemma~\ref{L:Injective}).  Applying \cite[Theorem~V.3.1]{taylor1981pseudodifferential}, we obtain
$$\|\f\|_{H(\Og_0)} \lesssim  \|\uu(T)\|_{H^1(\Og^c)}+ \|\uu_t(T)\|_{L^2(\Og^c)} .$$
Due to the finite speed of propagation, $\uu(T)$ is supported inside the ball $B(0,R)$ for some fixed big enough $R$. Due to Korn's inequality (see \cite[Theorem 5]{kondrat1988boundary}),  $\|\uu(T)\|_{H^1(\Og'_R)})  \cong \|\uu(T)\|_{H(\Og'_R)}$ and hence $\|\uu(T)\|_{H^1(\Og^c)} \cong \|\uu(T)\|_{H(\Og^c)}$.
We obtain
$$\|\f\|^2_{H(\Og_0)} \lesssim  \|\uu(T)\|^2_{H(\Og^c)}+ \|\uu_t(T)\|^2_{L^2(\Og)}  =  E_{\Og^c}(\uu,T) .$$
This finishes our proof.
\end{proof}

\section*{Acknowledgment}
The first author thanks the Simons Foundation for financial support. The second author's research was partially supported by the NSF grants DMS 1212125 and DMS 1616904. He thanks Professor M. de Hoop and his group at Rice University for their hospitality.


\begin{thebibliography}{10}

\bibitem{brytik2011decoupling}
{\sc Brytik, V., De~Hoop, M.~V., Smith, H.~F., and Uhlmann, G.}
\newblock Decoupling of modes for the elastic wave equation in media of limited
  smoothness.
\newblock {\em Communications in Partial Differential Equations 36}, 10 (2011),
  1683--1693.

\bibitem{chervova2016time}
{\sc Chervova, O., and Oksanen, L.}
\newblock Time reversal method with stabilizing boundary conditions for
  photoacoustic tomography.
\newblock {\em Inverse Problems 32}, 12 (2016), 125004.

\bibitem{dencker1982propagation}
{\sc Dencker, N.}
\newblock On the propagation of polarization sets for systems of real principal
  type.
\newblock {\em Journal of Functional Analysis 46}, 3 (1982), 351--372.

\bibitem{FPR}
{\sc Finch, D., Patch, S.~K., and Rakesh}.
\newblock Determining a function from its mean values over a family of spheres.
\newblock {\em SIAM Journal on Mathematical Analysis 35}, 5 (2004), 1213--1240
  (electronic).

\bibitem{hansen2003propagation}
{\sc Hansen, S., and Uhlmann, G.}
\newblock Propagation of polarization in elastodynamics with residual stress
  and travel times.
\newblock {\em Mathematische Annalen 326}, 3 (2003), 563--587.

\bibitem{HKN}
{\sc Hristova, Y., Kuchment, P., and Nguyen, L.}
\newblock Reconstruction and time reversal in thermoacoustic tomography in
  acoustically homogeneous and inhomogeneous media.
\newblock {\em Inverse Problems 24}, 5 (2008), 055006, 25.

\bibitem{kondrat1988boundary}
{\sc Kondrat'ev, V.~A., and Oleinik, O.~A.}
\newblock Boundary-value problems for the system of elasticity theory in
  unbounded domains. korn's inequalities.
\newblock {\em Russian Mathematical Surveys 43}, 5 (1988), 65.

\bibitem{kuchment2014radon}
{\sc Kuchment, P.}
\newblock {\em The Radon transform and medical imaging}, vol.~85.
\newblock SIAM, 2014.

\bibitem{kuchment2008mathematics}
{\sc Kuchment, P., and Kunyansky, L.}
\newblock Mathematics of thermoacoustic tomography.
\newblock {\em European Journal of Applied Mathematics 19}, 2 (2008), 191--224.

\bibitem{lin2008unique}
{\sc Lin, C.-L., Nakamura, G., and Sini, M.}
\newblock Unique continuation for the elastic transversely isotropic dynamical
  systems and its application.
\newblock {\em Journal of Differential Equations 245}, 10 (2008), 3008--3024.

\bibitem{marsden1994mathematical}
{\sc Marsden, J.~E., and Hughes, T.~J.}
\newblock {\em Mathematical foundations of elasticity}.
\newblock Courier Corporation, 1994.

\bibitem{qian2011new}
{\sc Qian, J., Stefanov, P., Uhlmann, G., and Zhao, H.}
\newblock An efficient neumann series-based algorithm for thermoacoustic and
  photoacoustic tomography with variable sound speed.
\newblock {\em SIAM J. Imaging Sci. 4}, 3 (2011), 850--883.

\bibitem{rachele2000inverse}
{\sc Rachele, L.~V.}
\newblock An inverse problem in elastodynamics: uniqueness of the wave speeds
  in the interior.
\newblock {\em Journal of Differential Equations 162}, 2 (2000), 300--325.

\bibitem{US}
{\sc Stefanov, P., and Uhlmann, G.}
\newblock Thermoacoustic tomography with variable sound speed.
\newblock {\em Inverse Problems 25}, 7 (2009), 075011, 16.

\bibitem{stefanov2011thermo}
{\sc Stefanov, P., and Uhlmann, G.}
\newblock Thermoacoustic tomography arising in brain imaging.
\newblock {\em Inverse Problems 27\/} (2011), 045004.

\bibitem{taylor1981pseudodifferential}
{\sc Taylor, M.~E.}
\newblock Pseudodifferential operators, volume 34 of princeton mathematical
  series, 1981.

\bibitem{taylor1978propagation}
{\sc Taylor, M.~E., et~al.}
\newblock Propagation, reflection, and diffraction of singularities of
  solutions to wave equations.
\newblock {\em Bulletin of the American Mathematical Society 84}, 4 (1978),
  589--611.

\bibitem{tittelfitz12}
{\sc Tittelfitz, J.}
\newblock Thermoacoustic tomography in elastic media.
\newblock {\em Inverse Problems 28}, 5 (2012), 055004.

\bibitem{tittelfitz2015inverse}
{\sc Tittelfitz, J.}
\newblock An inverse source problem for the elastic wave in the lower
  half-space.
\newblock {\em SIAM Journal on Applied Mathematics 75}, 4 (2015), 1599--1619.

\end{thebibliography}

\end{document}